\newtheorem{thm}{Theorem}[section]
\newcommand{\Gal}{{\operatorname{Gal}}}
\title{A counterexample to an optimistic guess about \'etale local systems}
\begin{document}

\author{Brian Lawrence}
\author{Shizhang Li}
\maketitle

Relative $p$-adic Hodge theory aims at extending the known $p$-adic Hodge theory to a $p$-adic local system on a rigid variety. 
Let $X$ be a geometrically connected, quasi-compact rigid analytic variety over a $p$-adic field $K$ and let $\mathbb{E}$ be a 
$\mathbb{Q}_p$-local system on the \'{e}tale site $X_{\acute{e}t}$. 
Liu and Zhu~\cite{LZ17} showed that if at one point $\bar{x} \in X(\bar{K})$ 
the stalk $\mathbb{E}_{\bar{x}}$ is de Rham as a $p$-adic Galois representation, then $\mathbb{E}$ is a de Rham local system;
in particular, the stalk of $\mathbb{E}_{\bar{y}}$ at any point $\bar{y} \in X(\bar{K})$ would be de Rham as well. 
They noted that the similar statements replacing ``de Rham'' by either ``crystalline'' or ``semistable'' are both wrong. 
However, inspired by potential semistability of de Rham representations~\cite{Ber02}, 
they ask~\cite[Remark 1.4]{LZ17} if a de Rham local system 
$\mathbb{E}$ on $X$ would become 
semistable\footnote{Here we make an ad hoc definition for a local system $\mathbb{E}$ on $X_{\acute{e}t}$ to be semistable: its stalk $\mathbb{E}_{\bar{y}}$ at any point $\bar{y} \in X(\bar{K})$ is semistable as a $p$-adic Galois representation. This is the weakest requirement, any reasonable definition should imply our condition.} 
after pulling the system back to a finite \'{e}tale cover of $X$, or even after enlarging the ground field $K$ by a finite extension. 
While the former guess may well be true, in this paper we construct an example illustrating the failure of the latter.

\begin{thm}
There exists a projective variety $X$ over a $p$-adic field $K$, and an \'etale local system $\mathbb{E}$ on $X$,
such that $\mathbb{E}$ is de Rham, but for every finite extension $K'/K$, the restriction of $\mathbb{E}$ to $X_{K'}$ is not semistable.

More precisely,
for every finite extension $K'$ of $K$,
there exist a further extension $L/K'$ and a point $x \in X(L)$,
such that $\mathbb{E}_x$ is not semistable as a representation of $\Gal_L$.
\end{thm}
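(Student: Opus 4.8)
The plan is to take for $X$ a Tate elliptic curve and for $\mathbb{E}$ a rank-one local system of finite order, whose stalks are Kummer characters that become ramified over \emph{every} finite extension of $K$. Fix an integer $m \ge 2$ with $\mu_m \subset \QQ_p$ (e.g.\ $m = p-1$ for $p$ odd, $m = 2$ for $p = 2$), and take $K = \QQ_p$. Let $E/K$ be a split Tate curve, so that for every finite $L/K$ one has $E(\bar L) = \bar L^\times/q^{\ZZ}$ with $q \in K^\times$, $v_K(q) > 0$. The $m$-power map $\mathbb{G}_m \to \mathbb{G}_m$ descends to an isogeny $E \to E'$ onto the Tate curve $E'$ with parameter $q^m$, with kernel $\mu_m$; this $\mu_m$-cover (equivalently, the order-$m$ character of $\pi_1^{\text{\'et}}(E')$ it defines, valued in $\mu_m \subset \QQ_p^\times$) is our local system $\mathbb{E}$ on $X := E'$. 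The first computation I would carry out is the identification of the stalks: for $x \in X(L)$ corresponding to $s \in L^\times/q^{m\ZZ}$, the fibre of the cover is the $\mu_m$-torsor $\{\,\zeta s^{1/m} : \zeta \in \mu_m\,\}$, generating $L(s^{1/m})$, so $\mathbb{E}_x$ is the Kummer character $\kappa_m(s)\colon \Gal_L \to \mu_m$, $g \mapsto g(s^{1/m})/s^{1/m}$. Note $v_L(s) \bmod m$ is well defined on $X(L)$ since $v_L(q^m) \equiv 0 \pmod m$, and $\kappa_m(s)$ is ramified exactly when $m \nmid v_L(s)$.

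Granting this, the de Rham claim is immediate: $\mathbb{E}$ has finite monodromy, so each stalk is a finite-order character, hence de Rham (it is trivial, so crystalline, after a finite extension, and being de Rham is insensitive to finite extensions of the base field). In particular $\mathbb{E}$ is de Rham at any point $x$ with $s \in \mathcal{O}_L^\times$, so by the theorem of Liu--Zhu applied to the smooth projective variety $E'$, the local system $\mathbb{E}$ is de Rham. I would also record that $\mathbb{E}$ becomes \emph{trivial}, hence semistable, after pullback along the finite \'etale cover $E \to E'$, in agreement with the remark that the finite-cover form of the guess should hold.

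For the failure of semistability I would invoke the classification of de Rham characters: a character of $\Gal_L$ is semistable if and only if it is crystalline, if and only if it has the form $\chi_{cyc}^{\,n}\cdot(\text{unramified})$; hence a ramified finite-order character is never semistable. Now let $K'/K$ be an arbitrary finite extension, put $L = K'$, and take $x \in X(L)$ corresponding to a uniformizer $s = \pi_L$. Then $v_L(s) = 1$ is prime to $m$, so $\mathbb{E}_x = \kappa_m(\pi_L)$ is ramified and therefore not semistable. Since $K'$ was arbitrary, this gives the ``more precisely'' statement, and a fortiori that $\mathbb{E}|_{X_{K'}}$ is not semistable for any $K'$.

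The crux, and the step I would treat most carefully, is the stalk computation together with the observation that makes the example work uniformly in $K'$: because $E$ has multiplicative reduction, the valuation map $E'(L) = L^\times/q^{m\ZZ} \to \ZZ/m$ is surjective for every $L$, so $X$ carries points of valuation prime to $m$—and hence ramified stalks—over every finite extension of $K$. With good reduction the tame stalks would all be unramified, so the Tate (multiplicative) reduction is essential. The two technical points that require care are the precise relation between $v_L(s) \bmod m$ and the ramification of $L(s^{1/m})/L$, and the fact that a ramified finite-order character is genuinely non-semistable (not merely non-crystalline).
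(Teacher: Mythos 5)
Your construction is correct and is, at bottom, the same example as the paper's, viewed through the Tate parametrization rather than Weierstrass coordinates: both take an elliptic curve with multiplicative reduction over $\QQ_p$, a finite cyclic \'etale cover of it, and the rank-one summand of $\pi_*\underline{\mathbf{Q}_p}$ cut out by a faithful character of the deck group; the stalk at a point whose coordinate (resp.\ Tate parameter) is a uniformizer is then a ramified finite-order Kummer character, hence not semistable. The paper does this with $m=2$ and the explicit curve $y^2=x(x-p)(x+1)$ with cover $z^2=x$, producing the needed $L$-point by Hensel's lemma once the absolute ramification degree of $L$ is at least $3$. Your version buys a little more: it works for general $m$, it takes $L=K'$ with no further extension, and it isolates the reason the example works uniformly in $K'$, namely that multiplicative reduction makes the valuation map $E'(L)=L^\times/q^{m\ZZ}\to\ZZ/m$ surjective for every $L$. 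The price is that you must invoke Tate's uniformization theorem (and, for the de Rham claim, either Liu--Zhu or the geometric-origin argument the paper uses), where the paper needs only Hensel's lemma.

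Two small corrections. First, your asserted classification ``a character of $\Gal_L$ is crystalline iff it is $\chi_{cyc}^n\cdot(\text{unramified})$'' is false for $L\neq\QQ_p$ (Lubin--Tate characters are crystalline); what you actually need, and what the paper cites as \cite[Prop.\ 7.17]{FO}, is that a finite-order character is semistable iff crystalline iff unramified --- which does hold, since such a character has all Hodge--Tate weights zero. Second, in your $p=2$, $m=2$ variant the Kummer theory is wild, so ``$\kappa_m(s)$ is ramified exactly when $m\nmid v_L(s)$'' fails in the ``only if'' direction; the implication you actually use ($v_L(s)$ prime to $m$ forces ramification, because $X^m-\pi_L$ is Eisenstein) survives, so the example still works.
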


\begin{proof}



Let $p$ be an odd prime, and
let $X$ be the elliptic curve over $\mathbb{Q}_p$ with Weierstrass equation
\[ y^2 = x (x-p) (x+1). \]
Let $X'$ be (the normalization of) the double cover of $X$ given by
\[ y^2 = x (x-p) (x+1), z^2 = x; \]
it is again an elliptic curve, Galois over $X$ with deck transformation group $\mathbf{Z} / 2 \mathbf{Z}$ cyclic of order 2.

Let $\pi \colon X' \rightarrow X$ be the covering map. 
Let $\mathbb{E}$ be the rank-1 direct summand of the rank-2 \'etale sheaf $\pi_* \underline{\mathbf{Q}_p}$ on $X$,
corresponding to the nontrivial character of the deck transformation group $\mathbf{Z} / 2 \mathbf{Z}$.
This $\mathbb{E}$ is de Rham because it comes from geometry.

On the other hand, for any $L$ of absolute ramification degree at least $3$, we can find a point $(x, y)$ above which $\mathbb{E}$ is not semistable, as follows.
Let $x$ be any uniformizer for $L$; then $x (x-p) (x+1)$ is a square in $L$, so we can find $y \in L$ such that $(x, y) \in X(L)$.

Now $\mathbb{E}_{(x, y)}$ is the rank-one representation of $\Gal_L$
\[ \rho_{L' / L} \colon \Gal_L \rightarrow \{ \pm 1\} \subseteq GL_1(\mathbf{Q}_p) \]
having kernel $\Gal_{L'}$, where $L' = L [ \sqrt{x}]$ is a ramified extension.
By ~\cite[Prop.\ 7.17]{FO}, $\rho_{L' / L}$ is not semistable. 
\end{proof}

We would like to thank Bhargav Bhatt, Zarathustra Brady, Johan de Jong, and Xinwen Zhu for interesting discussions. B.\ L.\ would like to thank the University of Michigan for its hospitality, and to acknowledge support from the National Science Foundation.

\bibliographystyle{amsalpha}

\end{document}